\patchcmd{\thebibliography}{\chapter*}{\section*}{}{}
\numberwithin{equation}{section} %\usepackage{geometry}
\newtheorem{definition}{Definition}[section]
\newtheorem{theorem}[definition]{Theorem}
\newtheorem{proposition}[definition]{Proposition}
\newtheorem{corollary}[definition]{Corollary}
\newtheorem{conjecture}[definition]{Conjecture}
\declaretheoremstyle[
spaceabove=3pt, spacebelow=3pt,
headfont=\normalfont\boldshape,
notefont=\normalfont, notebraces={(}{)},
bodyfont=\normalfont,
%postheadspace=\newline,
qed=\qedsymbol
]{mystyle}
\declaretheorem[style=mystyle]{example}[definition]
\newtheorem{example}[definition]{Example}
\newtheorem*{remark}{Remark}
\newcommand{\Addresses}{{% additional braces for segregating \footnotesize
  \bigskip
  \footnotesize

  K.~Huang, \textsc{Department of Mathematics, University of Rochester, Rochester, NY 14627}\par\nopagebreak
  \textit{E-mail address}: \texttt{keping.huang@rochester.edu}

}}
\title {Uniform Bounds for Periods of Endomorphisms of Varieties}
\author
{
Keping Huang
%\thanks{keping.huang@rochester.edu}
%}
%\affil{Department of Mathematics, University of Rochester
}
\date{\vspace{-2em} }
\begin{document}
\maketitle

\def\Z{{\bf Z}}
%{Split Case, over $ \mathbb{C}$}  \hfill Authors
%\vspace{1em}

%%PLEASE PUT YOUR ABSTRACT HERE
\begin{abstract}
Suppose $X$ is a variety defined over a finite extension $K$ of $\mathbb{Q}_p$ and suppose $X$ admits a model $\mathcal{X}$ defined over the ring of integers $R$ of $K$. Let $f:{X}\rightarrow {X}$ be an endomorphism of $X$ defined over $K$ that can be extended to an endomorphism of $\mathcal{X}$ 
defined over $R$. 
We apply a method of Fakhruddin to prove an explicit upper bound for the primitive period of periodic points defined over $R$. 
\end{abstract}
%%THE END OF ABSTRACT

\section{Introduction and notation}

An important problem in Diophantine geometry is to calculate the number of certain types of algebraic points on varieties. 
For example, it's crucial to bound the number of rational torsion points on elliptic curves or abelian varieties.
Analogously, an important problem in arithmetic dynamics is to find the number of rational preperiodic points. 

The following Morton-Silverman Conjecture is proposed in \cite{MS94}.

\begin{conjecture}\label{unif}
Let $F/\mathbb{Q}$ be a number field of degree $D$, let 
$\phi:
\mathbb{P}^N \rightarrow \mathbb{P}^N$ be a morphism of degree $d \ge 2$ defined over $F$, 
and let $\mathrm{Prep}(\phi, F)$ be the set of $F$-rational points that are preperiodic under $\phi$. There is a constant $C(D, N, d)$
such that
$$\# \mathrm{Prep}(\phi, F) \le  C(D, N, d).$$
\end{conjecture}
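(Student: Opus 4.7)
The plan is to follow the Morton--Silverman/Fakhruddin strategy: reduce the global problem to a local period bound at a prime of good reduction, count points of each bounded period by B\'ezout, and finally confront the preperiodic tail. First I would choose a prime $\mathfrak{p}$ of $F$ at which $\phi$ has good reduction, with $p = \mathrm{char}(\kappa(\mathfrak{p}))$ bounded in terms of $D$, $N$, and the height of $\phi$; such bounds on the smallest good prime come from the resultant of the defining equations of $\phi$. Completing at $\mathfrak{p}$ gives a local field $K = F_{\mathfrak{p}}$ with ring of integers $R$, and the standard model $\mathcal{X} = \mathbb{P}^N_R$ carries an extension of $\phi$ to an $R$-endomorphism, since $F$-rational points of $\mathbb{P}^N$ are automatically $R$-integral.

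Next I would invoke the main local result of this paper to obtain an explicit bound $B = B(p, [K:\mathbb{Q}_p], N, d)$ on the primitive period of any $R$-integral periodic point of $\phi$; in particular this bounds the primitive period of every $F$-rational periodic point. For each integer $n \le B$, the $F$-rational points of primitive period $n$ lie in the zero-dimensional scheme $\mathrm{Fix}(\phi^n)$, the intersection of the graph of $\phi^n$ with the diagonal in $\mathbb{P}^N \times \mathbb{P}^N$. A B\'ezout-type computation bounds $\#\mathrm{Fix}(\phi^n)$ by a polynomial in $d^n$ and $N$; summing over $n \le B$ yields an explicit uniform bound on the $F$-rational periodic points in terms of $D$, $N$, and $d$.

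The main obstacle, and the reason Conjecture \ref{unif} remains open in full generality, is the preperiodic tail. A preperiodic point $P$ that is not itself periodic has a tail length $m \ge 1$, the smallest integer with $\phi^m(P)$ periodic, and bounding $m$ uniformly in $D$, $N$, $d$ seems to require a dynamical Lehmer-type inequality: a uniform positive lower bound on the canonical height $\hat{h}_\phi(Q)$ for $F$-rational non-preperiodic $Q$. Such inequalities are known only in very restricted settings. I would attempt to combine the local period bound with Call--Silverman height-machine estimates at the archimedean places to control $m$, but I expect this final combination to be the serious obstruction. Absent such an inequality, the strategy above proves only the \emph{periodic}-point version of Conjecture \ref{unif}, which is precisely the unconditional content that the local theorem of this paper is able to supply.
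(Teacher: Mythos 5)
The statement you are asked about is Conjecture \ref{unif}, the Morton--Silverman conjecture, and the paper does not prove it --- it is stated as an open conjecture, with the paper's actual contribution (Theorem \ref{main}) being a local period bound for endomorphisms extending to a model over the ring of integers. So no proof of yours could be checked against a ``paper proof''; what has to be judged is whether your argument actually establishes the conjecture, and it does not, as you yourself concede in the final paragraph. Two gaps are fatal. First, the uniformity breaks already at the choice of prime: the smallest prime $\mathfrak{p}$ of good reduction is controlled by the resultant of $\phi$, hence by the \emph{height} of $\phi$, which is not among the allowed parameters $D$, $N$, $d$. Consequently the local bound $B$ you extract (from Theorem \ref{main} or from any of the classical period bounds) depends on $|k|$ and $v(p)$ at that prime, so even the count of $F$-rational \emph{periodic} points obtained by summing B\'ezout bounds for $\mathrm{Fix}(\phi^n)$, $n \le B$, depends on the height of $\phi$ and is not uniform in $(D,N,d)$. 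Second, as you note, nothing in the local theory controls the length of the preperiodic tail; bounding it uniformly would require something like a uniform dynamical Lehmer-type lower bound on canonical heights, which is unavailable. Your outline is a fair description of why the known techniques give height-dependent or prime-dependent bounds rather than the conjectured uniform bound, but it is not a proof of the statement, and it should not be presented as one.
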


If $K = \mathbb{Q}$ and we replace $\mathbb{P}^N$ by an elliptic curve $E$, then the above conjecture 
becomes Mazur's theorem proved in \cite{Maz77}, which states a uniform bound on the number of torsion rational points on elliptic curves defined over $\mathbb{Q}$. 
Mazur's Theorem has been generalized by Kamienny (\cite{Kam92}) and Kamienny-Mazur (\cite{KM95}) to quadratic fields, 
and Merel (\cite{Mer96}) to number fields of any degree.

A lot of work has been done concerning the Morton-Silverman Conjecture. 
For some of the results see \cite{Nar89, Pez94, Sil95, Li96, Zie96, Poo98, Fak01, Fak03, Hut09, BGHKST13}.

%A lot of work has been done concerning the Uniform Boundedness Conjecture. 
%For some of the results see \cite{Pez94, Sil95, Zie96, Poo98, Fak01, Fak03, Hut09, BGHKST13}. 

A possible approach to attack Conjecture \ref{unif} is to bound the periods of rational periodic points. 
Along this line Pezda (\cite{Pez94, Pez97}) and Zieve (\cite{Zie96}) proved bounds for the length of integral cycles of certain polynomial endomorphisms of affine spaces. 
Fakhruddin proved in \cite{Fak01} a boundedness result for endomorphisms of certain proper schemes. 
Hutz proved in \cite{Hut09} a bound for endomorphisms of non-singular projective varieties with good reduction, 
and in \cite{Hutz16} a bound for polarized endomorphisms of projective varieties. 
Bell, Ghioca, and Tucker proved in \cite{BGT15} a bound for \'etale morphisms of non-singular models of varieties.

\begin{comment}
A possible approach to attack Conjecture \ref{unif} is to bound the periods of periodic points. Many of the current results rely on good reductions modulo certain primes. 
Our result also relies on a condition on the model. However, we don't require the variety to be projective. Instead, we just require an endomorphism of a model over the ring of integers. 
%There is a closely related notion of weak N\'eron models in \cite{Hsi96}. 
Our result more flexible than many of the previous results. 
For example, our result works for singular varieties. 

\end{comment}

We use the following notation throughout this paper.

\begin{center}
\begin{tabular}{p{3cm}p{9cm}}
$K$&  a finite extension of $\mathbb{Q}_p$. \\
$X/K$ & a variety of dimension $d$ defined over $K$. \\
$R$ & the ring of integer of $K$. \\
%$S$ & $\mathrm{Spec}(R)$. \\
$f$ & an endomorphism of $X$ defined over $K$.  \\
$k$ & the residue field of $K$. \\
$\pi$ &  a uniformizer of $K$. \\
$v$ & the valuation of $K$ (mapping $\pi$ to 1). \\
$\mathcal{X} $ & a model of $X$. \\
$\bar{\mathcal{X}}$ & the  special fiber of $\mathcal{X}$. \\
\end{tabular}
\end{center}

The \emph{primitive period} of a periodic point $P$ has is the smallest positive integer $n$ such that $f^n(P) = P$.  The main result of this paper is the following theorem. 

\begin{comment}
Suppose $K$ is a finite extension of $\mathbb{Q}_p$. 
Suppose $R$ is its ring of integer, $k$ is its residue field and $\pi$ is a uniformizer. 
Let $v(p) \in \mathbb{Z}$ be the valuation of $p$ in $K^\times$ ($v(\pi) = 1$).
\end{comment}
%and let $\mathrm{N}$ be the norm map from $K$ to $\mathbb{Q}_p$ or $\mathbb{F}_q[[t]]$. 
%Let $X/K$ be a variety of dimension $d$ and $f: X\rightarrow X$ be an endomorphism of $X$ defined over $K$. 
\begin{theorem}\label{main}
With notation as above, 
assume that $f$ extends to an endomorphism of $\mathcal{X}$ defined over $R$. 
Let $P\in \mathcal{X}(R)$ be a periodic point under $f$. 
Let $d'$ be maximum of the dimensions of the cotangent spaces at all points in $\bar{\mathcal{X}}({k})$. 
%and let $t$ be the larger solution of the equation $v(p) \cdot t = p^t - 1$. 
Then the primitive period $n$ of $P$ satisfies that

\begin{comment}
\begin{equation}\label{PartOne}
n\le
|\bar{\mathcal{X}}(k)|\cdot  p^{\lfloor{t}\rfloor} \cdot \left(|k|^{d'} - 1\right). 
\end{equation}
In particular, the primitive period $n$ of $P$ satisfies that 
\end{comment}

\begin{equation}\label{PartTwo}
n\le \begin{cases}
|\bar{\mathcal{X}}(k)|\cdot  p^{%\lfloor \log_{p-1}\rfloor 
v(p) - 1 } \cdot \left(|k|^{d'} - 1\right),     &\text{$p > 2$},\\
|\bar{\mathcal{X}}(k)| \cdot 
% 2^{\lfloor (1 + \varepsilon)\log_{2}(v(2)) \rfloor} 
2^{v(2) } \cdot \left(|k|^{d'} - 1\right),               
&\text{$p =2$}. 
%and $\log_2v(2) > C = C(\varepsilon, p)$.
\end{cases}
\end{equation}
%The bound in Equation (\ref{PartTwo}) is explicit. 
\end{theorem}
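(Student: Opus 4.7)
The plan is to follow Fakhruddin's strategy, refining the period by passing through successive $\pi$-adic neighborhoods of $\bar P$ and splitting the bound into three multiplicative factors. The first factor arises from reducing to a fixed point on the special fiber: the reduction $\bar P\in\bar{\mathcal X}(k)$ is periodic under $\bar f$ with primitive period $m$ dividing $n$, and since $\bar f$ permutes the finite set $\bar{\mathcal X}(k)$, we have $m\le|\bar{\mathcal X}(k)|$. Setting $g=f^m$ makes $\bar P$ a fixed point of $\bar g$, and it suffices to bound the primitive period $n_1=n/m$ of $P$ under $g$.

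The second factor $|k|^{d'}-1$ comes from the linear action on the cotangent space. Passing to the completion $\widehat{\mathcal O}_{\mathcal X,\bar P}$, choose generators $t_1,\dots,t_{d'}$ lifting a basis of the cotangent space at $\bar P$. Because $g$ fixes $\bar P$, one has expansions
\[
g^*(t_i)=\pi c_i+\sum_j a_{ij}t_j+(\text{higher order in }t)
\]
with $c_i,a_{ij}\in R$. The orbit of $P$ under $g$ sits in the formal tube $U_1=\{Q\in\mathcal X(R):Q\equiv\bar P\pmod\pi\}$, and its image modulo $\pi^2$ in the $k$-vector space $V=U_1/U_2$ of dimension at most $d'$ is governed by the affine map $\bar T:u\mapsto\bar Au+\bar c$ with $\bar A=(a_{ij}\bmod\pi)$ and $\bar c=(c_i\bmod\pi)$. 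The image of $P$ in $V$ lies in a periodic cycle of $\bar T$ of length $r$ dividing $n_1$; a finite-orbit analysis on $V$ (shifting to a fixed point to reduce to the linear case, where nonzero orbits avoid $0$) yields $r\le|k|^{d'}-1$. Replacing $g$ by $h=g^r$ makes $P$ a fixed point modulo $\pi^2$.

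The third factor, a $p$-power, emerges from a $p$-adic Taylor analysis. Write $h(P)=P+\pi^j Q$ with $j\ge 2$ and $Q\in R^{d'}$. An iterative computation in $\widehat{\mathcal O}_{\mathcal X,\bar P}$ gives
\[
h^t(P)\equiv P+\pi^j\sum_{k=0}^{t-1}(\bar A^r)^k Q\pmod{\pi^{2j}}
\]
for every $t\ge 1$, so that the equation $h^{n_2}(P)=P$ with $n_2=n_1/r$ forces a $p$-divisibility condition on $n_2$ once the action of $\bar A^r$ on the $Q$-direction has been trivialized. Repeating this argument, each round strictly increases the precision $j$ while at most multiplying $n_2$ by $p$. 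The process halts once $j$ exceeds the ramification index $v(p)$, at which point Hensel's lemma promotes $P$ to a genuine fixed point; tallying the rounds gives $n_2\le p^{v(p)-1}$ for $p>2$ and $n_2\le 2^{v(2)}$ for $p=2$.

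The main obstacle is this third stage: carefully tracking how the precision $j$ improves per iteration and how many factors of $p$ are collected along the way. The asymmetry between $p>2$ and $p=2$ is intrinsic, reflecting the slower convergence of the $p$-adic exponential and logarithm in residue characteristic $2$; the additional factor of $2$ in the bound matches the classical threshold for Hensel-type lifts to converge.
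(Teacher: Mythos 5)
Your three-factor outline (residue period, cotangent-space order, $p$-power) mirrors the paper's strategy, but your Step 2 has a genuine gap. The map you induce on $V=U_1/U_2\cong k^{d'}$ is affine, $\bar T(u)=\bar A u+\bar c$, and your bound $r\le |k|^{d'}-1$ rests on ``shifting to a fixed point to reduce to the linear case.'' An affine map over $k$ need not have a fixed point (eigenvalue $1$ with $\bar c$ outside the image of $\bar A-\mathrm{id}$), and this translation-type behavior really does occur for genuine periodic points: take $K=\mathbb{Q}_p(\zeta_p)$, $\pi=\zeta_p-1$, $\mathcal{X}=\mathbb{A}^1_R$, $g(x)=\zeta_p x+\pi$, $P=0$. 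Then $g^p=\mathrm{id}$, so $P$ has primitive period $p$, $\bar P$ is fixed, and $x(g^j(P))\equiv j\pi \pmod{\pi^2}$, so the image of $P$ in $V\cong k$ has $\bar T$-period $p>|k|^{d'}-1=p-1$. Thus the cycle length of $P$ modulo $\pi^2$ is not bounded by $|k|^{d'}-1$; and since your Step 3 already spends the entire budget $p^{v(p)-1}$ (resp.\ $2^{v(2)}$), an extra factor of $p$ from Step 2 would break the total bound (it is fatal already in the unramified case, where that budget equals $1$). The paper avoids this by defining $r$ differently: it is the order of the induced \emph{linear} map on the cotangent space $\mathfrak{m}/\mathfrak{m}^2$ of the local ring of the orbit scheme, via Proposition 1 of \cite{Fak01} ($n\le n_0\, r\, p^t$); the translation phenomenon you are seeing is exactly what gets absorbed into the $p$-power factor $p^t$, which is then bounded by $v(p)-1$ (resp.\ $v(2)$) through the ramification estimate of Proposition 3 of \cite{Fak01}, not through a per-stage precision count.

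Two further issues. Your Step 3 is only a sketch: the exponents $v(p)-1$ for $p>2$ and $v(2)$ for $p=2$ are asserted rather than derived, and ``Hensel's lemma promotes $P$ to a genuine fixed point'' is not the correct mechanism; the paper instead proves the residual order is a $p$-power via the $(\mathrm{id}+h)^s$ binomial expansion (following \cite{Poo14}) and quotes Fakhruddin's valuation estimate for the exponent. Also, your coordinates $t_1,\dots,t_{d'}$ and the identification $V\cong k^{d'}$ implicitly treat $\mathcal{X}$ as smooth at $\bar P$, whereas the singular case (where $d'$ may exceed $\dim X$) is a main point of the theorem; the paper handles it by the filtration argument showing $\dim_k(\mathfrak{m}/\mathfrak{m}^2)\le d'+1$ with the $\pi$-direction split off, and some such argument is needed to justify your dimension count. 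On the positive side, your observation that a nonzero vector's orbit under an invertible linear map avoids $0$, hence has length at most $|k|^{d'}-1$, is a nice elementary substitute for the group-theoretic bound the paper cites from \cite{Dar05} --- but it can only be invoked after the affine/translation issue has been eliminated.
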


\begin{remark}
The size $|\bar{\mathcal{X}}(k)|$ can be bounded using the Lang-Weil bound (\cite{LW54}) when $\bar{\mathcal{X}}$ is geometrically irreducible and 
the Weil bound (\cite{Wei49, Del74}) when $\bar{\mathcal{X}}$ is geometrically irreducible and non-singular. 
\end{remark}

Compared with \cite{Pez94}, \cite{Pez97}, \cite{Zie96}, \cite{Hut09}, and \cite{Hutz16}, 
our bound is larger,
but our result works for morphisms with a weaker notion of good reduction, as well as many non-projective situations, 
and our result covers all the known cases. 
We only require that the morphism $f$ can be extended to the model $\mathcal{X}$ and we do not require the model $\mathcal{X}$ to be non-singular. 
Example \ref{cubic} is a situation where the morphism $f $ does not have a good reduction, but we can still control the size of the special fiber and hence obtain a bound for the primitive period.

\begin{comment}
In the case when the variety $X$ has good reduction, the morphism $f$ has good reduction, and the prime $p>2$, our result improves the bound given in \cite{Hut09} ($d' = d$ in this case). 
For clarity, our $v(p)$ is his $e$ and the term 
$\left(|k|^{d'} - 1\right)$ in this paper corresponds to his $r_V$, 
the order of an induced endomorphism on a certain tangent space on the special fiber. 
See the end of this section for more explanations. 
When $p>2$, the result in \cite{Hut09} is
%\begin{equation}\label{PartOne}
%n\le
%{|\mathcal{X}}(k)|\cdot  p^{\lfloor{t}\rfloor} \cdot \left(|k|^{d'} - 1\right). 
%\end{equation}
%In particular, the primitive period $n$ of $P$ satisfies that 
\begin{equation}\nonumber
n\le 
|\bar{\mathcal{X}}(k)|\cdot  p^{\lfloor\log_{2}(v(p))\rfloor + 1} \cdot \left(|k|^{d} - 1\right).  % &\text{$p > 2$},
%|\bar{\mathcal{X}}(k)| \cdot 2^{\lfloor \log_{\alpha}\left((\sqrt{5}v(2) + \sqrt{5(v(2))^2 + 4})/2\right) \rfloor} \cdot \left(|k|^{d'} - 1\right)  ,                 &\text{$p =2$}.
\end{equation}
Note that $d' = d$ in this case. 
\end{comment}

A key idea of this paper involves a careful analysis of the special fiber in the case of bad reduction. 
In the uniform boundedness conjecture in Diophantine geometry, 
the progress made in \cite{LT02}, \cite{KRZB16} and \cite{Sto19} 
also involved careful analysis of the special fiber in the case of bad reduction. This idea might lead to further progress in the dynamical situation.

An immediate consequence of the first part of Theorem \ref{main} is the following theorem.

\begin{theorem}\label{Qp}
With notation as before, suppose $K = \mathbb{Q}_p$ and suppose $f$ extends to an endomorphism of $\mathcal{X}$ defined over $R = \mathbb{Z}_p$. 
Let $P\in \mathcal{X}(\mathbb{Z}_p)$ be a periodic point under $f$. 
Let $d'$ be maximum of the dimensions of the cotangent spaces at all points in $\bar{\mathcal{X}}(\mathbb{F}_p)$. 
Then the primitive period $n$ of $P$ satisfies that

\begin{equation}
n\le \begin{cases}
|\bar{\mathcal{X}}(\mathbb{F}_p)| \cdot \left(p^{d'} - 1\right),     &\text{$p > 2$},\\
|\bar{\mathcal{X}}(\mathbb{F}_p) | \cdot 
% 2^{\lfloor (1 + \varepsilon)\log_{2}(v(2)) \rfloor} 
2 \left(2^{d'} - 1\right),               
&\text{$p =2$}. 
%and $\log_2v(2) > C = C(\varepsilon, p)$.
\end{cases}
\end{equation}
\end{theorem}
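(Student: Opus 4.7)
The plan is to apply Theorem~\ref{main} directly to the specialization $K = \mathbb{Q}_p$ and collect the resulting numerics; the statement is flagged in the text as ``an immediate consequence,'' so essentially all of the work has already been done. In this setting the ring of integers is $R = \mathbb{Z}_p$, the residue field is $k = \mathbb{F}_p$ (so $|k| = p$), and one may take the uniformizer to be $\pi = p$ itself, so the normalization $v(\pi) = 1$ forces $v(p) = 1$.

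With these identifications in hand, the only step is to substitute into the bound \eqref{PartTwo}. For $p > 2$ the exponent $v(p)-1$ equals $0$, so the factor $p^{v(p)-1}$ becomes $1$, while $|k|^{d'}-1$ becomes $p^{d'}-1$; the bound therefore collapses to
\[
n \le |\bar{\mathcal{X}}(\mathbb{F}_p)| \cdot (p^{d'} - 1).
\]
For $p = 2$, the value $v(2) = 1$ gives $2^{v(2)} = 2$, and $|k|^{d'} - 1 = 2^{d'} - 1$, so the bound becomes
\[
n \le |\bar{\mathcal{X}}(\mathbb{F}_2)| \cdot 2 \cdot (2^{d'} - 1).
\]
Both match the claimed expressions in Theorem~\ref{Qp}, and the point $P \in \mathcal{X}(\mathbb{Z}_p) = \mathcal{X}(R)$ satisfies the hypotheses of Theorem~\ref{main} with $d'$ interpreted via cotangent spaces at points of $\bar{\mathcal{X}}(\mathbb{F}_p)$.

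There is no genuine obstacle in this proof: the nontrivial content is entirely absorbed into Theorem~\ref{main}, and the only ``main step'' is the observation that over the unramified base $\mathbb{Q}_p$ the ramification contribution $p^{v(p)-1}$ in the odd case trivializes while in the case $p=2$ it contributes only a factor of $2$.
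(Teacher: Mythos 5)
Your proposal is correct and is exactly the paper's intended argument: the paper treats Theorem~\ref{Qp} as an immediate consequence of Theorem~\ref{main}, obtained by noting that for $K=\mathbb{Q}_p$ one has $R=\mathbb{Z}_p$, $k=\mathbb{F}_p$, and $v(p)=1$, so the factor $p^{v(p)-1}$ (resp.\ $2^{v(2)}$) reduces to $1$ (resp.\ $2$). Nothing further is needed.
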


\begin{comment}

%In the case when $X$ is smooth, we have $d' = d$. 
In the case when $X$ is non-singular and $f$ has good reduction, Theorem \ref{Qp} improves Corollary 3 in \cite{Hut09} and Theorem 3.6 of \cite{Hutz16}. 
Unfortunately there is a small gap in the proof of Corollary 3 in \cite{Hut09}. 
At the end of Page 1125 the author claimed that the period of the reduction $\bar{P}$ could be bounded by $p^d$.  %where $d$ is the dimension of $X$. 
However, consider the map $f(x) = (x^2 + x + 1)/ (x-1)^4$. 
We can see that $f$ has good reduction at $2$ and the map on the special fiber is $0\mapsto 1  \mapsto \infty \mapsto 0$, so any $\bar{P} \in\bar{\mathcal{X}}(k)$ has period $3 >2^1$. 
If $p > 2$, we can use interpolation to construct rational functions with a similar property. 
But in the case when $X = \mathbb{P}^d$ and $f$ has good reduction, the argument in \cite{Hut09} is sufficient to show that
$$n\le |\bar{\mathcal{X}}(\mathbb{F}_p)| \cdot p \left(p^{d} - 1\right) = 
|\bar{\mathcal{X}}(\mathbb{F}_p)| \cdot p \left(p^{d'} - 1\right). $$

\end{comment}

\begin{remark}
The dimension $d'$ in the theorem is the same as the dimension of $X$ when $\mathcal{X}$ is non-singular and might be larger than the dimension of $X$ when $\mathcal{X}$ is singular. 
\end{remark}

The proof of Theorem \ref{main} follows the outline in \cite{Fak01}. 
Let $\bar{P}$ be the reduction of $P$ modulo $\pi$. 
Then $\bar{P}$ is periodic under $f$. 
First, the bound on the size $|\bar{\mathcal{X}}(k)|$ gives a bound of the period of $\bar{P}$. 
Second, passing to an iterate of $f$ we may assume the orbit of $P$ 
reduces only to one point in the special fiber $\bar{\mathcal{X}}$. Then we prove a bound for the order of the induced map on a certain cotangent space (of the model $\mathcal{X}$). 
This bound depends on the geometric data of $\bar{\mathcal{X}}$. 
This allows us to find a bound that also works for singular varieties. Third, we prove that the remaining part is a $p$-power and give a bound for it. 
%Although we cannot use power series expansion as in many other papers if $X$ is singular, 
%we can adapt an argument in \cite{Poo14} to find a bound. 

The outline of this paper is as follows. 
Section \ref{weak} contains some general examples in the case of cubic polynomial morphisms over the projective line.  
In Section 3 we give the proof of Theorem \ref{main}.

\begin{comment}
The following example shows the flexibility of Theorems \ref{main} and \ref{Qp}. 
It works in some cases when $X$ has bad reduction. 

\begin{example}
Let $K = \mathbb{Q}_p$ and let $X = \{ y^2 - x^2 = p^2 z^2\}\subseteq \mathbb{P}^2/K$. 
Then $X$ is a non-singular curve of genus $1$. 
Let $\mathcal{X}/\mathbb{Z}_p$ be defined by the same equation, 
then $\overline{\mathcal{X}} = \{ y^2 - x^2 = 0\}\subseteq \mathbb{P}^2/\mathbb{F}_p$ is not irreducible and consists of two irreducible components. 
We cannot use the results for good reduction, 
but as long as $f$ can be extended to an endomorphism of $\mathcal{X}$ defined over $\mathbb{Z}_p$, 
we can apply Theorem \ref{Qp}, and the equality $|\overline{\mathcal{X}}(\mathbb{F}_p) | = 2p + 1$ can be used to prove a bound for the periods of points in $\overline{\mathcal{X}}(\mathbb{Z}_p)$. 

\end{example}

\end{comment}

\section{Weak N\'eron Models} \label{weak}
The following definition of weak N\'eron models is a slight modification of the definition on Page 278 of \cite{Hsi96}. 

\begin{definition}\label{Neron}
An $R$-scheme $\mathcal{X}$ is called a \emph{weak N\'eron model} of $(X/K, f)$ if it is separated of finite type over $R$ and 
there is a finite morphism $f': \mathcal{X} \rightarrow \mathcal{X}$ so that the following axioms hold:
\begin{enumerate}
    \item The generic fiber $\mathcal{X}_K$ of $\mathcal{X}$ is isomorphic to $X$ over $K$. 
    \item $X(K) \cong \mathcal{X}(R)$. 
    \item The restriction of the morphism $f'$ to the generic fiber of $\mathcal{X}$ is $f$. 
\end{enumerate}
\end{definition}

\begin{corollary}
If $(X/K, f)$ admits a weak N\'eron model $\mathcal{X}$, 
then any period $K$-rational point has primitive period given in Theorem \ref{main}. 
\end{corollary}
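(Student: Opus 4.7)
The plan is to verify that a weak N\'eron model supplies precisely the hypotheses required for Theorem~\ref{main}, so that the corollary reduces to an immediate application of that theorem.

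First I would take a $K$-rational periodic point $P\in X(K)$ with primitive period $n$, i.e.\ $f^n(P)=P$ and $n$ is minimal. By axiom~(2) of Definition~\ref{Neron}, the identification $X(K)\cong \mathcal{X}(R)$ lets me regard $P$ as an element of $\mathcal{X}(R)$. By axiom~(1), the generic fiber of $\mathcal{X}$ is $X$, and by axiom~(3), the finite morphism $f':\mathcal{X}\to\mathcal{X}$ (defined over $R$) restricts on the generic fiber to $f$. In particular, $f'$ is an endomorphism of $\mathcal{X}$ over $R$ that extends $f$, which is exactly the hypothesis on the pair $(\mathcal{X},f)$ required by Theorem~\ref{main}.

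Next I would check that $P$ is a periodic point of $f'$ with the same primitive period. Since $f'$ agrees with $f$ on the generic fiber and the bijection $X(K)\cong\mathcal{X}(R)$ is compatible with this restriction, any integer $m$ satisfies $f'^{m}(P)=P$ in $\mathcal{X}(R)$ if and only if $f^{m}(P)=P$ in $X(K)$. Therefore the primitive period of $P$ under $f'$ equals its primitive period $n$ under $f$.

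With these observations in place, the bound in Theorem~\ref{main} applied to $(\mathcal{X},f',P)$ yields the asserted estimate on $n$. I do not anticipate any real obstacle: the argument is a bookkeeping translation between the axioms of a weak N\'eron model and the assumptions of Theorem~\ref{main}, and the only point worth stating carefully is the match of primitive periods, which follows directly from axioms~(2) and~(3).
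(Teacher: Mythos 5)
Your proposal is correct and follows essentially the same route as the paper: axiom (2) of Definition \ref{Neron} lifts the $K$-rational periodic point to an $R$-point of $\mathcal{X}$, axioms (1) and (3) give the $R$-endomorphism extending $f$, and Theorem \ref{main} applies directly. The extra check that primitive periods agree is a harmless elaboration of the same argument.
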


\begin{proof}
By Condition 2 of Definition \ref{Neron}, 
any $K-$point of $X$ can be extended to an $R-$point of $\mathcal{X}$. 
So we can apply Theorem \ref{main}. 
\end{proof}

\begin{example}\label{cubic}
Suppose $p>2$. 
Suppose $\phi(x)\in K[x] $ is a cubic polynomial without a $K$-rational  repelling fixed point. Then by Theorem 2.1 of \cite{BH11}, 
we know that $\phi$ admits a weak N\'eron model over $K$. 
The special fiber is either irreducible or is normal crossing with at most 2 components. 
By the proof of Theorem 2.1 of \cite{BH11} we know that the residual cycle of any periodic point $P \in \mathbb{P}^1(K)$ involves only one component of the special fiber. 
In addition, no $K$-rational point is reduced to a node in the special fiber. 
Then the primitive period $n$ of $P$ satisfies that 
\begin{equation}\label{One}
n\le
(|k| + 1)\cdot  p^{v(p) - 1} \cdot \left(|k|- 1\right). 
\end{equation}
%with the notation the same as in Theorem \ref{main}. 

For example, let
$\phi(x) = z + z^2 + p z^3$. 
Then $\phi$ does not have a potential good reduction by Corollary 4.6 of \cite{Ben01}. 
However, by Section 3.2 of \cite{BH11}, 
it admits no $\mathbb{Q}_p-$rational repelling fixed point. 
Then we have 
$n  \le  (p + 1)  \left(p- 1\right). $
\end{example}

\section{The Proof of Theorem \ref{main}}

\begin{comment}
We follow the outline in the proof of  \cite{MS94}, \cite{Fak01}, \cite{Hut09} and \cite{BGT15}. 
It consists of three parts. 
First, we bound the residual period. 
Second, we bound the order of the induced morphism of $f$ on the cotangent space. 
It's mainly related with certain dimension on the special fiber. 
In the third step, 
we use a difference operator method in \cite{Poo14} 
to bound a certain $p$-power part of the period. 

\end{comment}

Let $n_0$ be the primitive period of the reduction $\bar{P}$ of $P$. 
Consider the iterate $g:=f^{n_0}$ of $f$. Then the reduction $\bar{P}$ of $P$ is fixed under $g$. 
Let $\mathrm{Spec}(A)$ be the reduced subscheme of $\mathcal{X}$ determined by the orbit $O_g(P)$ of $P$ under $g$. 
This is possible because the fact that $P$ is periodic implies that
its $O_g(P)$ is finite, and hence the subscheme is affine. 
Recall that $A$ is local of finite rank over $R$ 
and $g$ induces an $R$-morphism $\sigma$ of $A$. 
Also let $\mathfrak{m}$ be the maximal ideal of $A$.

\begin{example}
Let $K = \mathbb{Q}_3, p=\pi = 3$ and let $X = \mathbb{P}^1$. 
Suppose $f:X\rightarrow X$ is given by $f(x) = x^2 -4x +3$ 
and $P = 0$. Then $P $ is of primitive period $2$ with orbit $O_f(P) = \{0, 3\}$, and $\bar{P}$ is fixed under $f$. 
In this case the ring $A = \mathbb{Z}_3[x]/ x(x-3)$. 
It's a local ring with maximal ideal $\mathfrak{m} = (x,3)$. 
\end{example}

\begin{comment}

\end{comment}

We first recall 
Proposition 1 of \cite{Fak01}. We will use the notation there. 

\begin{proposition}
With the notation of Theorem \ref{main} and the above two paragraphs, 
we have $n\le n_0 rp^t$ 
where $n_0$ is the primitive period of the reduction $\bar{P}$ of $P$ 
and $r$ is the order of the induced map on the cotangent space $\mathfrak{m}/\mathfrak{m}^2$.  \qedsymbol
\end{proposition}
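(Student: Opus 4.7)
The plan is to analyze the action of $\sigma$ on the finite local $R$-algebra $A$ and to show that some iterate $\sigma^{rp^t} = \mathrm{id}_A$, from which $n = n_0 \cdot \#O_g(P) \le n_0 r p^t$ follows immediately: since $\sigma$ cyclically permutes the orbit $O_g(P)$ and $A$ is reduced, the order of $\sigma$ as an automorphism of $A$ equals the primitive $g$-period $m$ of $P$, and $n = n_0 m$.

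First I would verify the local structure. Because $g = f^{n_0}$ fixes $\bar{P}$ modulo $\pi$ and permutes $O_g(P)$, every orbit point reduces to $\bar{P}$; hence $\mathrm{Spec}(A)$ is supported at a single closed point, so $A$ is a local $R$-algebra of finite rank with maximal ideal $\mathfrak{m}$ and residue field $k$, and $g$ restricts to an $R$-algebra endomorphism $\sigma:A\to A$ cyclically permuting the orbit points. So it suffices to bound the order of $\sigma$.

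Next I would study the induced $k$-linear action $\bar{\sigma}$ of $\sigma$ on $\mathfrak{m}/\mathfrak{m}^2$. By definition $r$ is the order of $\bar{\sigma}$, so $\tau := \sigma^r$ satisfies $(\tau - \mathrm{id})(\mathfrak{m}) \subseteq \mathfrak{m}^2$. A short induction on $i$, using that $\tau$ is a ring homomorphism via the identity
\[
\tau(xy) - xy \;=\; (\tau(x)-x)\,\tau(y) \;+\; x\,(\tau(y)-y),
\]
then gives $(\tau - \mathrm{id})(\mathfrak{m}^i) \subseteq \mathfrak{m}^{i+1}$ for all $i \ge 1$; in particular $\tau$ acts trivially on every graded piece $\mathfrak{m}^i/\mathfrak{m}^{i+1}$.

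The crucial and most delicate step is converting this $\mathfrak{m}$-adic contraction into an actual order bound for $\tau$, and this is where the $p$-adic structure of $R$ enters. Writing $\tau = \mathrm{id} + \mu$ with $\mu := \tau - \mathrm{id}$ in the ring of additive endomorphisms of $A$, we have
\[
\tau^p - \mathrm{id} \;=\; \sum_{j=1}^{p} \binom{p}{j}\, \mu^j,
\]
where $p \mid \binom{p}{j}$ for $1 \le j \le p-1$ and $\mu^p$ already gains $p$ extra degrees of $\mathfrak{m}$-adic depth. Consequently $(\tau^p - \mathrm{id})(\mathfrak{m}^i) \subseteq p\,\mathfrak{m}^{i+1} + \mathfrak{m}^{i+p}$. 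Iterating this and combining the gains in the $\pi$-adic and $\mathfrak{m}$-adic filtrations (noting $A$ is Artinian in both, with explicit lengths controlled by $v(p)$ and the embedding dimension at $\bar{P}$), one shows that for some effective $t$ the map $\tau^{p^t}$ kills everything and hence equals $\mathrm{id}_A$. This gives $m \mid rp^t$ and thus $n \le n_0 r p^t$. The main obstacle is precisely this bookkeeping: one must balance the loss of $\mathfrak{m}$-adic depth coming from the term $\mu^p$ against the gain of $\pi$-adic depth coming from the coefficients $\binom{p}{j}$, and produce a clean value of $t$ that feeds into the explicit exponents $p^{v(p)-1}$ and $2^{v(2)}$ appearing in Theorem \ref{main}.
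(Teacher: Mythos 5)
Your reduction of the Proposition to bounding the order of $\sigma$ is sound and matches the standard setup: all orbit points reduce to $\bar{P}$, so $A$ is local and reduced, the order of $\sigma$ equals the primitive $g$-period $m$ of $P$, $n=n_0m$, and $\tau=\sigma^{r}$ satisfies $(\tau-\mathrm{id})(\mathfrak{m}^{i})\subseteq\mathfrak{m}^{i+1}$ for all $i$. The gap is in your crucial step. First, $A$ is \emph{not} Artinian: it is a finite torsion-free $R$-module (it embeds into a product of copies of $R$ via the orbit sections), hence of Krull dimension $1$, and $\pi^{i}\in\mathfrak{m}^{i}\neq 0$ for every $i$; neither the $\mathfrak{m}$-adic nor the $\pi$-adic filtration terminates, so no amount of iterating the estimate $(\tau^{p}-\mathrm{id})(\mathfrak{m}^{i})\subseteq p\,\mathfrak{m}^{i+1}+\mathfrak{m}^{i+p}$ makes $\tau^{p^{t}}$ ``kill everything.'' Second, and more fundamentally, the congruence $(\tau-\mathrm{id})(\mathfrak{m})\subseteq\mathfrak{m}^{2}$ alone does not force $\tau$ to have finite order at all: on $R[x]/(x^{2})$ the $R$-automorphism $x\mapsto(1+\pi)x$ satisfies the congruence and has infinite order. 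So any correct argument must re-use the finiteness of the order of $\tau$ (which you do have from the orbit), and your sketch never feeds that back in; the iteration only shows $\tau^{p^{t}}\to\mathrm{id}$ in the $\mathfrak{m}$-adic topology, which proves nothing about the order.

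The mechanism the paper relies on (it does not reprove the Proposition but cites Fakhruddin's Proposition 1; the relevant argument appears in Step 3, following Poonen) runs in the opposite direction: one shows that a \emph{finite-order} $\tau$ with $(\tau-\mathrm{id})(\mathfrak{m})\subseteq\mathfrak{m}^{2}$ whose order $s$ is prime to $p$ must be the identity. Writing $h=\tau-\mathrm{id}$ and expanding $0=\sum_{i\ge 1}\binom{s}{i}h^{i}(a)$, one compares values of $\nu$ (where $\nu(a)$ is the largest $\ell$ with $a\in\mathfrak{m}^{\ell}$, finite because $\bigcap_{\ell}\mathfrak{m}^{\ell}=0$): since $s$ is a unit in $R$, the term $s\,h(a)$ has strictly smaller $\nu$ than every other term, a contradiction unless $h=0$. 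Applied to $\tau=\sigma^{r}$ this gives that the order of $\tau$ is a $p$-power, i.e.\ $m=rp^{t}$ and $n=n_0rp^{t}$, which is all the Proposition asserts; the explicit bound $t\le v(p)-1$ (resp.\ $t\le v(2)$) is a genuinely separate valuation argument (Fakhruddin's Proposition 3, cited in Step 3), not deferred bookkeeping --- in your framework as written no finite value of $t$ can be produced at all.
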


%\begin{proof}[The Order of %f%]
%Following \cite{Fak01} and \cite{Hut09},
%Following the notation of \cite{Hut09}, 
%and $e$ can be bounded by $v(p)-1$ if $p>2$ and $v(p)$ if $p=2$. 

\begin{proof}[Proof of Theorem \ref{main}]
The proof consists of 3 steps. 

\vspace{1em}

\textbf{Step 1} Bound $n_0$.

Clearly $n_0\le |\bar{\mathcal{X}}(k)|$. 

\vspace{1em}

\textbf{Step 2} Bound $r$.

Now we show that $r$
is bounded by $|k|^{d'} - 1$. 
%By Corollary 2 of \cite{Dar05} and as $A/\mathfrak{m}\cong k$ it suffices to show that 
We first show that
$\mathrm{dim}_k (\mathfrak{m}/\mathfrak{m}^2) \le d' + 1$.

Write $\mathfrak{m} = (m,\pi)$ where $m$ is the ideal defining $P$. 
Therefore $\overline{m}:= m \pmod \pi$ defines the closed point of $\bar{A}:= A /\pi$. 
Consider the filtration 
$$\mathfrak{m} = (m,\pi) \supseteq (m^2, \pi) \supseteq \mathfrak{m}^2 = (m^2, m\pi, \pi^2). $$
As $(m,\pi)(m,\pi) \subseteq (m^2, \pi)$, we have that $(m,\pi)/(m^2, \pi)$ is a $k$-vector space and $$\mathrm{dim}_k ((m,\pi)/(m^2, \pi)) = \mathrm{dim}_k (\overline{m}/\overline{m}^2) \le d' . $$
Now we look at the quotient module $(m^2, \pi)/(m^2, m\pi, \pi^2)$. 
Clearly it's a $k$-vector space. 
Consider the map $$\phi: ( \pi)/(m\pi, \pi^2) \rightarrow (m^2, \pi)/(m^2, m\pi, \pi^2)$$
induced by the inclusion $ (\pi) \rightarrow (m^2, \pi)$. 
Since any $a\in (m^2, \pi)$ can be represented as $a = b+c $ with $b\in m^2$ and $c\in \pi$, 
we have that $\phi$ is surjective. 
It follows that 
$$\mathrm{dim}_k \left((m^2, \pi)/(m^2, m\pi, \pi^2)\right)  \le  
\mathrm{dim}_k \left(( \pi)/(m\pi, \pi^2)\right)=  \mathrm{dim}_k \left(A/(m, \pi)\right) = 1 $$
as $\pi$ is not a zero divisor in $A$. 
Therefore 
$$\mathrm{dim}_k (\mathfrak{m} / \mathfrak{m}^2)  
=  \mathrm{dim}_k \left( ( m,\pi)/(m^2,\pi) \right) 
+  \mathrm{dim}_k \left((m^2,\pi) /(m^2, m\pi, \pi^2)\right) \le d' + 1. $$

On the other hand, the subspace of $\mathfrak{m} / \mathfrak{m}^2$ 
spanned by $\pi$ is invariant under $\sigma$ as $\sigma$ is an $R$-morphism. 
Recall that $\mathfrak{m} = (m,\pi)$. 
Therefore $\sigma$ induces a $k$-linear map $\bar{\sigma}$ on $\overline{m}/\overline{m}^2$
and $\sigma$ is the identity precisely when $\bar{\sigma}$ is. 
By Corollary 2 of \cite{Dar05}, we have $ r \le |k| ^{d'}-1$
and hence the desired result.
%Therefore $\sigma $ acts on a subspace of $\mathfrak{m} / \mathfrak{m}^2$ 
%of dimension $d'$. 
% 

\vspace{1em}

\textbf{Step 3} Bound $t$.

%\textbf{Case 1} $\mathrm{char} R = 0$. 

As in \cite{Fak01}, we look at the induced map $\sigma: A\rightarrow A$. 
Write $\sigma = \mathrm{id} + h$. 
Then $h(\mathfrak{m}) \subseteq \mathfrak{m}^2$. 
Let $\nu: A\setminus\{0\} \rightarrow \mathbb{Z}$ be defined as follows: 
for $0\neq a \in A$, let $\nu(a)$ be the largest integer $\ell$ such that
$a\in \mathfrak{m}^\ell$. 
Then $\nu = v$ on $R\setminus \{0\}$. 
Note that $\nu$ might not be additive, 
but $\nu(ar) = \nu(a) + v(r)$ for $a\in A\setminus\{0\}$ 
and $r\in R\setminus \{0\}$. 
Since $h(\mathfrak{m})\subseteq \mathfrak{m}^2$, for all $a\in \mathfrak{m}$ we must have 
either $h(a) = 0 $ or $\nu(h^j(a)) > \nu(a)~(j>0)$. 

First we show that the order $s$ of $\sigma$ is of the form $p^t$ with $t\in \mathbb{Z}_{\ge 0}$. 
It suffices to show that if $s\neq  1$ then $p|s$. 
Following the proof of Theorem 1 of \cite{Poo14}, 
we have 
\begin{equation}
    \begin{aligned}
      a &= \sigma^{s}(a) = (\mathrm{id} + h)^{s}(a) = \sum_{i=0}^{s} {{s} \choose i} h^i(a) \\
      &= a + {s \choose 1} h(a) + {s \choose 2} h^2(a) + \cdots  {s \choose {s-1}}  h^{s-1}(a) +  h^{s}(a), \\
      0 & =  {s} h(a) + {s \choose 2} h^2(a) + \dots + {s} h^{s-1}(a) +  h^{s}(a) . 
    \end{aligned}
\end{equation}
Suppose by contrary that $s \neq 1$ and $p$ does not divide $s$. Since $\nu(h^N(a)) > \nu(h(a))$ for $a\in \mathfrak{m}, N \ge 2$, we must have $0 \notin  \mathfrak{m}^{\nu(h(a)) + 1}$. Contradiction!

By Proposition 3 of \cite{Fak01} we know that $t \le v(p) - 1$ when $p>2$ and $t\le v(p)  $ when $p=2$, so we have the desired result.

\begin{comment}

Now
\begin{equation}
    \begin{aligned}
      a &= \sigma^{p^t}(a) = (\mathrm{id} + h)^{p^t}(a) = \sum_{i=0}^{p^t} {{p^t} \choose i} h^i(a) \\
 %     &= a + {p^t \choose 1} h(a) + {p^t \choose 2} h^2(a) + \dots + {p^t \choose {p^t-1}} h^{p^t-1}(a) +  h^{p^t}(a), \\
      0 & =  {p^t} h(a) + {p^t \choose 2} h^2(a) + \dots + {p^t } h^{p^t-1}(a) +  h^{p^t}(a) . 
    \end{aligned}
\end{equation}
Clearly $v\left({p^t \choose k}\right) \ge v(p^t)$ for $1\le k\le p^t-1$ and therefore we must have either $h=0$ or 
\begin{equation}
    \begin{aligned}
\nu\left(p^th(a)\right) &= \nu\left(h^{p^t}(a)\right) \ge \nu\left(h(a)\right)+ p^t - 1, \\
t \nu(p)  + \nu\left(h(a)\right) &\ge \nu\left(h(a)\right) + p^t -1, \\
tv(p) &\ge p^t -1, \\
    \end{aligned}
\end{equation}

If $p\ge 3$, then $tv(p)\ge p^t-1 \ge t(p-1)^{t-1}$, so $v(p)\ge (p-1)^t$ and we have 
$t\le \log_{p-1}\left(v(p)\right) + 1$. 
If $p = 2$, then let $\phi(t) = tv(p) - (2^t -1)$. 
Looking at $\phi'(t)$ we have that $\phi$ has at most 2 roots, the smaller one of which is $0$. 
Suppose the two roots are $0$ and $t_0$, then $\phi(t) > 0$ when $0 < t < t_0$. 
On the other hand, for any fixed $\varepsilon >0$ it holds that
$\phi((1 + \varepsilon) \log_2(v(2))) < 0$ for $v(2)$ sufficiently large, so we must have $t_0 < (1 + \varepsilon) \log_2(v(2))$ when $\log_2v(2)$ is larger than some explicit $C(\varepsilon, p)$. 
\end{comment}

%\textbf{Case 2} $\mathrm{char}R = p>0$.

%We have $t =  0 $ in this case. In fact, it suffices to show that 
%if $\sigma^p = \mathrm{id}$ then $\sigma = \mathrm{id}$. 

\end{proof}

\section*{Acknowledgement}

I would like to express my gratitude to my advisor Thomas Tucker for suggesting this project and for many valuable discussions. I also want to thank Najmuddin Fakhruddin and Benjamin Hutz for useful comments on an earlier draft of this paper. 
Finally, I would like to thank the anonymous referee for helpful comments.

\bibliography{refFile}{}
\bibliographystyle{amsalpha}

\Addresses

\end{document}